\newcommand{\bbZ}{\mathbb{Z}}
\newcommand{\rmA}{\mathrm{A}}
\newcommand{\rmB}{\mathrm{B}}
\newcommand{\rmE}{\mathrm{E}}
\newcommand{\rmH}{\mathrm{H}}
\newcommand{\rmI}{\mathrm{I}}
\newcommand{\rmU}{\mathrm{U}}
\newcommand{\rmZ}{\mathrm{Z}}
\DeclareMathOperator{\id}{id}
\DeclareMathOperator{\Ab}{Ab}
\DeclareMathOperator{\Cl}{Cl}
\DeclareMathOperator{\Gr}{Gr}
\DeclareMathOperator{\Or}{Or}
\DeclareMathOperator{\Pq}{Pq}
\DeclareMathOperator{\Aut}{Aut}
\DeclareMathOperator{\Hom}{Hom}
\DeclareMathOperator{\Ker}{Ker}
\newtheorem{theorem}{Theorem}
\newtheorem{proposition}[theorem]{Proposition}
\newtheorem{corollary}[theorem]{Corollary}
\newtheorem{lemma}[theorem]{Lemma}
\theoremstyle{definition}
\newtheorem{definition}[theorem]{Definition}
\newtheorem{example}[theorem]{Example}
\newtheorem{remark}[theorem]{Remark}
\newtheorem{question}[theorem]{Question}
\numberwithin{theorem}{section}
\numberwithin{equation}{section}
\numberwithin{figure}{section}
\title{Groups, conjugation and powers}
\author{Markus Szymik}
\address{School of Mathematics and Statistics, 
The University of Sheffield,
Sheffield S3 7RH,
United Kingdom, and\newline
\indent Department of Mathematical Sciences,
NTNU Norwegian University of Science and Technology,
7491 Trondheim,
Norway}
\email{m.szymik@sheffield.ac.uk, markus.szymik@ntnu.no}
\author{Torstein Vik}
\address{Department of Mathematical Sciences,
NTNU Norwegian University of Science and Technology,
7491 Trondheim,
Norway}
\email{torstvik@stud.ntnu.no}
\begin{document}

%%%

\begin{abstract}
We introduce the notion of the power quandle of a group, an algebraic structure that forgets the multiplication but keeps the conjugation and the power maps. Compared with plain quandles, power quandles are much better invariants of groups. We show that they determine the central quotient of any group and the center of any finite group. Any group can be canonically approximated by the associated group of its power quandle, which we show to be a central extension, with a universal property, and a computable kernel. This allows us to present any group as a quotient of a group with a power-conjugation presentation by an abelian subgroup that is determined by the power quandle and low-dimensional homological invariants.
\end{abstract}

% 20A05 Axiomatics and elementary properties of groups
% 20F05 Generators, relations, and presentations of groups
% 20J06 Cohomology of groups 
% 20N02 Sets with a single binary operation (groupoids)
 
%%%

\maketitle

\thispagestyle{empty}

%%%

\section*{Introduction}

Groups are one of the most important algebraic structures known today. While fundamental for the mathematical description of symmetry, the group axioms themselves reflect this purpose only poorly. The usual presentation of the theory of groups in terms of a multiplication, inverses, and a unit is actually just one of many. After all, the~$n$--ary operations on groups are given by the elements of the free group on~$n$ generators, which have been studied for more than a century.

The attempt to found a theory of symmetry on conjugation instead of multiplication has led to the algebraic theory of quandles, which has many applications, most prominently in the theory of knots and braids~\cite{Joyce, Matveev, Brieskorn, Kauffman, Fenn--Rourke, Elhamdadi--Nelson}. Still, the relationship between groups and quandles is comparatively loose. Every group has an underlying quandle, but this invariant is useless in distinguishing abelian group, for instance, where conjugation is trivial. Conversely, every quandle has an associated group, but this group will only be finite in trivial cases, destroying all hopes of reproducing groups from quandles. 

In this paper, we introduce a new algebraic structure, namely{\it~power quandles}, that is made to remedy these defects of plain quandles by taking into account not only the conjugation in groups, but also all~$0$--ary and~$1$--ary operations that we have: the unit~$e$ and the power maps~$g\mapsto g^n$ for any integer~$n$. The precise statement of the axioms is Definition~\ref{def:power_quandle} in Section~\ref{sec:power_quandles}, which also contains some elementary examples. The short Section~\ref{sec:abelian} discusses{\it~abelian} power quandles, opening the gate for abelianization and homology.

Section~\ref{sec:forgetful} contains our results on the forgetful functor that sends each group~$G$ to its underlying power quandle~$\Pq(G)$. We show in Theorem~\ref{thm:central_quotient} that an isomorphism~$\Pq(G)\cong\Pq(H)$ of power quandles implies that the center quotients are isomorphic:~$G/\rmZ(G)\cong H/\rmZ(H)$. The centers themselves are also isomorphic, at least when the groups are finite; in general, we can still deduce that the centers have the same cardinality~(see Theorem~\ref{thm:center}). Both results together place the question, whether finite groups are determined by their underlying power quandles, in the context of Baer's problem~\cite{Baer} of describing all groups with preassigned central and central quotient group~(see Question~\ref{q:forgetful}).

In the Section~\ref{sec:adjoint}, we study the left-adjoint~$P\mapsto\Gr(P)$ of the forgetful functor~$\Pq$. When trying to reconstruct a group~$G$ from its power quandle~$\Pq(G)$, the canonical candidate is~$\Gr\Pq(G)$, which comes with a surjection~$\Gr\Pq(G)\to G$, the co-unit of the adjunction. We show that it displays~$\Gr\Pq(G)$ as a{\it~central} extensions of~$G$ that we characterize by a universal property~(see Theorems~\ref{thm:central} and~\ref{thm:universal}), and we compute the abelian kernel, which measures the failure of~$\Gr\Pq(G)\to G$ to be an isomorphism, in terms of the power quandle and{\it~homological} information about the group~$G$, namely the abelianization~$\rmH_1(G;\bbZ)$ and the Schur multiplier~$\rmH_2(G;\bbZ)$~(see Theorem~\ref{thm:kernel}).

In the final Section~\ref{sec:image}, we show that finite groups that are of the form~\hbox{$G\cong\Gr(P)$} are determined by their underlying power quandles~(see Theorem~\ref{thm:pq-gen}). We call such groups{\it~pq-generated}. Examples are not difficult to come by. In fact, in Question~\ref{q:pq-gen} we ask whether all finite groups are pq-generated, and Theorem~\ref{thm:pq-gen} relates this directly to Question~\ref{q:forgetful}.

Some parts of this work have been formalized in the Lean proof assistant~\cite{Lean,Code}.

%%%

\section{Power quandles}\label{sec:power_quandles}

In this section, we define power quandles as algebraic structures that have operations with properties familiar from the conjugation, the unit, and the power maps in groups~(see Definition~\ref{def:power_quandle}). As a preparation, we recall the two more basic structures that abstract these ideas individually, before we merge them into one.

A{\it~rack}~$(R,\rhd)$ is a pair consisting of a set~$R$ together with a binary operation~\hbox{$\rhd\colon R\times R\to R$} such that, for all elements~$a$ in~$R$, the left-multiplication~$\lambda_a\colon R\to R,\,b\mapsto a\rhd b$ is an automorphism of~$(R,\rhd)$. The fact that~$\lambda_a$ is a morphism of racks can be rephrased, as a formula, as
\begin{equation}\label{eq:rack}
a\rhd(b\rhd c)=(a\rhd b)\rhd(a\rhd c)
\end{equation}
for all elements~$b$ and~$c$ in~$R$. 
By definition, a rack brings its own symmetries, the left-multiplications. However, all ~{\it natural} automorphisms are generated by the canonical automorphism~\hbox{$a\mapsto a\rhd a$}~(see~\cite[Thm.~5.4]{Szymik:1}). A{\it~quandle} is a rack such that the canonical automorphism is trivial:
\begin{equation}
a\rhd a = a
\end{equation}
for all~$a$. 
A{\it~unit} for a rack or quandle is an element~$e$ that is{\it~fixing},
\begin{equation}\label{eq:fixing}
e\rhd b = b,
\end{equation}
and{\it~fixed},
\begin{equation}\label{eq:fixed}
a\rhd e = e,
\end{equation}
for all elements~$a$ and~$b$~(see~\cite[Sec.~2.2]{Lawson--Szymik}). This defines~{\it unital racks} and~{\it unital quandles}. By~\eqref{eq:fixed}, the left-multiplications are morphisms of unital racks or quandles. Every group has an underlying unital quandle, with~$a\rhd b=aba^{-1}$ and the unit~$e$.

%%%

An{\it~action}~$\pi$ of the multiplicative abelian monoid~$(\mathbb{Z},\times)$ of the integers on a set~$X$ is given by a family~$(\,\pi^n\colon X\to X\,|\,n\in\bbZ\,)$ of unary operations on~$X$ such that
\begin{equation}
\pi^1(x)=x
\end{equation}
and
\begin{equation}
\pi^m(\pi^n(x))=\pi^{mn}(x)
\end{equation}
for all elements~$x$ of~$X$. Every group has an underlying~$(\mathbb{Z},\times)$--action, defined by~\hbox{$\pi^n(a)=a^n$}.

%%%

We are now ready for the main definition of this paper.

\begin{definition}\label{def:power_quandle}
A{\it~power quandle}~$(P,\rhd,\pi,e)$ is given by a unital quandle~$(P,\rhd,e)$ together with an action of the multiplicative abelian monoid~$(\mathbb{Z},\times)$ of the integers such that these structures are compatible in the following sense:\\
First, we have
\begin{equation}
\pi^0(a)=e
\end{equation}
for all~$a$ in~$P$.\\
Second, the left-multiplications are compatible with the~$\pi$--action:
\begin{equation}\label{eq:compatible_with_action}
a\rhd\pi^n(b)=\pi^n(a\rhd b)
\end{equation}
for all~$a$ and~$b$ in~$P$ and integers~$n$.
This second condition implies that we get a map
\[
\lambda\colon P\longrightarrow\Aut(P,\rhd,\pi,e),\,a\longmapsto \lambda_a
\]
into the automorphism group of the structure.\\ 
Third and last, we require that this map~$\lambda$ is compatible with the canonical operations on the right hand side, given by~$f\rhd g=fgf^{-1}$ and~$\pi^n(f)=f^n$ and the identity~$\id_P$. In formulas, this is equivalent to~\eqref{eq:rack},~\eqref{eq:fixing}, and the new axiom
\begin{equation}
\pi^n(a)\rhd b = a\rhd^n b,
\end{equation}
for all elements~$a$ and~$b$ in~$P$ and all integers~$n$, where~$a\rhd^n b=\lambda^n_a(b)$ is the result of applying the left-multiplication~$\lambda_a(-)=a\rhd(-)$ operator~$n$ times to~$b$.
\end{definition}

\begin{example}\label{ex:underlying_group}
If~$G$ is a group, a power quandle structure on the same underlying set can be defined by the equation~$a\rhd b=aba^{-1}$ and~\hbox{$\pi^n(a)=a^n$} and the unit~$e$. This structure has already implicitly been used in Definition~\ref{def:power_quandle}, where the automorphism group of a power quandle was equipped with a power quandle structure in this way.
\end{example}

Power quandles form a category with the obvious morphisms, and the preceding example can be rephrased to say that there is a `forgetful' functor~$\Pq\colon G\mapsto\Pq(G)$ from the category of groups to the category of power quandles. We will say more about this functor and its adjoint in the following sections. For now, we restrict ourselves to give some examples that show, among other things, that the category of power quandles is much larger than the category of groups.

\begin{example}\label{ex:underlying_abelian}
A power quandle with~$a\rhd b =b$ for all~$a$ and~$b$ is essentially the same as a set~$P$ together with a~$(\mathbb{Z},\times)$--action such that~$\pi^0$ is constant. Such an action is given by the constant~$e$, an involution~$\pi^{-1}$, and a family of self-maps~$\pi^p$, one for each prime number~$p$, such that all of these commute with each other.
\end{example}

\begin{example}
There are more power quandles than groups. More precisely, the forgetful functor is not essentially surjective: there are power quandles that are not isomorphic to the power quandle underlying a group. For instance, there is only one group of order~$3$, up to isomorphism, and it is abelian, so the underlying power quandle is given as in the previous Example~\ref{ex:underlying_abelian}. On the other hand, there are clearly more such power quandle structures on any set with three elements. Specifically, the one with~$\pi^n=\id$ for~$n\not=0$ is different than the one coming from the group.
\end{example}

\begin{remark}
Here is another aspect in which quandles that underlie groups are special. Let us write~$\kappa_a(b)=(ab)^{-1}=b^{-1}a^{-1}$. It is straightforward to verify that we have~$\kappa_a\kappa_a(b)=aba^{-1}=\lambda_a(b)$. This shows that all left-multiplications are {\it even} permutations in quandles that underlie groups.
\end{remark}

\begin{example}
There are more power quandle morphisms than morphisms of groups. More precisely, the forgetful functor is not full: there are groups~$G$ and~$H$ and maps~$G\to H$ that are not morphisms of groups but compatible with the underlying power structures. Specifically, let~\hbox{$V\cong\bbZ/2\times\bbZ/2$} be the Klein group. There are~$16$ group morphisms~$V\to V$, but every map~$V\to V$ that sends the unit~$e$ to itself is compatible with the power structure, and there are~$64$ of those.
\end{example}

%%%

\begin{example}\label{ex:orbits}
Recall that, given any quandle~$Q$, the set~$\Or(Q)$ of orbits is defined as the set of equivalence classes~$[a]$ of elements~$a$ of~$Q$ for the equivalence relation generated by~$a\rhd b\sim b$ for all~$a$ and~$b$. If~$P$ is a power quandle, then~$\Or(P)$ inherits a~$(\mathbb{Z},\times)$--action via~$\pi^n([a])=[\pi^n(a)]$. This is well-defined by~\eqref{eq:compatible_with_action}, and it equips~$\Or(Q)$ with the structure of a power quandle for which the conjugation~$\rhd$ is trivial, as in Example~\ref{ex:underlying_abelian}. Of particular interest is the case of the power quandle~$P=\Pq(G)$ of a group~$G$: then~$\Or\Pq(G)$ is the set~$\Cl(G)$ of conjugacy classes of elements, with its inherited power structure. This additional information is very relevant for representation theory~(see~\cite{Meir--Szymik}, for instance). The power maps are often presented alongside the character table, as in GAP~\cite{GAP}. 
\end{example}

%%%

\section{Abelian power quandles}\label{sec:abelian}

For every algebraic theory one can ask for an additional abelian group structure on the models such that all operations are homomorphisms~\cite{Szymik:2}. For groups, these are just the abelian groups. For plain or unital quandles, such a structure is equivalent to a module over the ring~$\bbZ[q^{\pm1}]$ of integral Laurent polynomials, the quandle structure being given by~\hbox{$a\rhd b=(1-q)a+qb$}. For power quandles, we have the following result.

\begin{lemma}
Let~$P$ be a power quandle and an abelian group such that the operations~$\rhd$ and~$\pi^n$ are homomorphisms of groups and~$e=0$ is zero. Then the equation~$a\rhd b=b$ holds for all~$a$ and~$b$ in~$P$.
\end{lemma}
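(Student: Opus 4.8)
The plan is to use the hypothesis that $\rhd\colon P\times P\to P$ is a homomorphism of abelian groups, which is by far the strongest assumption available, and to combine it with the two unit axioms \eqref{eq:fixing} and \eqref{eq:fixed}; the $(\mathbb{Z},\times)$--action and the rack and quandle identities will turn out not to be needed.

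Concretely, for arbitrary elements~$a$ and~$b$ of~$P$, I would write $a=a+0$ and $b=0+b$ in the abelian group~$P$, and then use that $\rhd$ respects the group structure of the product $P\times P$ to compute
\[
a\rhd b=(a+0)\rhd(0+b)=(a\rhd 0)+(0\rhd b).
\]
Since $e=0$, the ``fixed'' axiom \eqref{eq:fixed} gives $a\rhd 0=a\rhd e=e=0$, while the ``fixing'' axiom \eqref{eq:fixing} gives $0\rhd b=e\rhd b=b$. Substituting these into the display yields $a\rhd b=0+b=b$, which is the assertion.

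Equivalently, and perhaps more conceptually, one can first observe that any group homomorphism out of a direct product of abelian groups splits as a sum of homomorphisms from the two factors, so that $a\rhd b=\phi(a)+\psi(b)$ for endomorphisms~$\phi$ and~$\psi$ of the abelian group~$P$; the ``fixing'' axiom \eqref{eq:fixing} then forces $\psi=\id_P$ and the ``fixed'' axiom \eqref{eq:fixed} forces $\phi=0$, giving the same conclusion. Either way there is no real obstacle here: the only point that needs a moment's care is the bookkeeping for homomorphisms out of a product, which is precisely where the additivity of~$\rhd$ is used up, and the proof is complete in a couple of lines.
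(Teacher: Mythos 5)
Your proof is correct, and its central step --- using that $\rhd\colon P\times P\to P$ is additive to split $a\rhd b=(a\rhd 0)+(0\rhd b)$ --- is exactly the decomposition the paper uses. Where you differ is in how the two summands are evaluated. The paper does not appeal to the unit axioms at all: it first proves $a\rhd 0=0$ by writing $0=\pi^0(0)$ and applying the compatibility relation \eqref{eq:compatible_with_action} together with $\pi^0(x)=e$, and it then identifies $0\rhd b$ with $b$ by noting that $a\rhd b=0\rhd b$ is independent of $a$, so that setting $a=b$ and using idempotence gives $0\rhd b=b\rhd b=b$. You instead read off $a\rhd 0=0$ from the \emph{fixed} axiom \eqref{eq:fixed} and $0\rhd b=b$ from the \emph{fixing} axiom \eqref{eq:fixing}; both are legitimately available, since a power quandle is by Definition~\ref{def:power_quandle} in particular a unital quandle. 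Your route is shorter and slightly more general: it shows the conclusion already holds for any unital quandle whose operation $\rhd$ is additive, with the $(\mathbb{Z},\times)$--action playing no role, whereas the paper's argument routes through $\pi^0$ and the quandle idempotence instead. Your second, ``conceptual'' phrasing via $a\rhd b=\phi(a)+\psi(b)$ is just a repackaging of the same computation and is equally fine. There is no gap in either version.
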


\begin{proof}
We start by noting~$\pi^0(a)=e=0$ for all elements~$a$ in the power quandle~$P$. Equation~\eqref{eq:compatible_with_action} then gives
\[
a\rhd 0=a\rhd\pi^0(0)=\pi^0(a\rhd 0)=\pi^0(0)=0.
\] 
It follows that we have
\[
a\rhd b=(a\rhd 0)+(0\rhd b)=0\rhd b
\]
for all~$a$ and~$b$. Since the right hand side is visibly independent of~$a$, it must be equal to~$b\rhd b=b$.
\end{proof}

It follows that the quandle structure of abelian power quandles is automatically trivial and only the power operations matter. The situation can be described in more elaborate terms as follows.

\begin{proposition}
The category of abelian group objects in power quandles is equivalent to the category of abelian groups with linear actions of the monoid~$(\mathbb{Z},\times)$, and hence to the category of modules over the corresponding monoid ring. 
\end{proposition}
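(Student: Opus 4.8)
The plan is to exhibit mutually inverse functors between the two categories and check that the composites are (naturally isomorphic to) the identity. An abelian group object in power quandles is, by definition, an object $P$ of the category of power quandles together with an abelian group structure on the underlying set for which $e=0$ and all the structure maps $\rhd$ and $\pi^n$ are group homomorphisms; by the preceding Lemma the quandle structure is then forced to be trivial, $a\rhd b=b$. So first I would observe that the only remaining data is the family $(\pi^n\colon P\to P\mid n\in\bbZ)$, which by Definition~\ref{def:power_quandle} and the abelian group structure consists of group endomorphisms of $P$ satisfying $\pi^1=\id_P$, $\pi^m\circ\pi^n=\pi^{mn}$, and $\pi^0=0$. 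This is precisely a linear action of the multiplicative monoid $(\bbZ,\times)$ on the abelian group $P$: a monoid homomorphism $(\bbZ,\times)\to(\operatorname{End}_{\bbZ}(P),\circ)$. (The condition $\pi^0=0$ is automatic, since $0$ is the multiplicative zero of $\bbZ$ and any monoid homomorphism must send the absorbing element $0$ to an idempotent endomorphism that also absorbs, which in $\operatorname{End}_{\bbZ}(P)$ under composition forces... actually here I would just note $\pi^0=\pi^{0\cdot 0}=\pi^0\circ\pi^0$ and $\pi^0=\pi^{0\cdot n}=\pi^0\circ\pi^n$ for all $n$, and then invoke that the power-quandle axiom $\pi^0(a)=e$ supplies it directly, so no extra hypothesis is needed on the module side beyond being a linear $(\bbZ,\times)$-action in the sense already used for the unital-quandle/Laurent-polynomial comparison.)

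Next I would define the two functors explicitly. In one direction, send an abelian group object $P$ in power quandles to the abelian group $P$ equipped with the linear $(\bbZ,\times)$-action $n\mapsto\pi^n$; on morphisms, a morphism of abelian group objects is a group homomorphism that is also a power-quandle morphism, i.e. commutes with all $\pi^n$, which is exactly a morphism of $(\bbZ,\times)$-modules, so the functor is the obvious one on arrows. In the other direction, given an abelian group $A$ with a linear $(\bbZ,\times)$-action $n\mapsto\alpha_n$, equip $A$ with $e=0$, the trivial conjugation $a\rhd b=b$, and $\pi^n=\alpha_n$; I would then verify that $(A,\rhd,\pi,0)$ satisfies all the axioms of Definition~\ref{def:power_quandle} — the unital-quandle axioms hold trivially because $\rhd$ is the second projection, $\pi^0=\alpha_0=0$ holds because $\alpha$ is a linear action and $0$ is absorbing in $(\bbZ,\times)$, Equation~\eqref{eq:compatible_with_action} reads $\pi^n(b)=\pi^n(b)$, the map $\lambda$ is constant at $\id_A$, and the final axiom $\pi^n(a)\rhd b=a\rhd^n b$ reads $b=b$ — and that all structure maps are group homomorphisms, so this is indeed an abelian group object in power quandles. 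On morphisms this functor again does nothing, so functoriality is immediate.

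Finally I would check that the two composites are the identity functors on the nose (not merely up to natural isomorphism): starting from an abelian group object, the Lemma guarantees $\rhd$ was already trivial and $e$ was already $0$, so rebuilding it from its $(\bbZ,\times)$-module data recovers exactly the original object; starting from a module, forgetting down and back up is visibly the identity. The second sentence of the statement — the equivalence with modules over the monoid ring $\bbZ[(\bbZ,\times)]$ — is then just the standard fact that a linear action of a monoid $M$ on an abelian group is the same thing as a module over the monoid ring $\bbZ[M]$, applied to $M=(\bbZ,\times)$. The only place demanding any care, rather than being a bookkeeping exercise, is making sure the list of axioms in Definition~\ref{def:power_quandle} really collapses to ``abelian group $+$ linear $(\bbZ,\times)$-action'' with no residual constraints and no missing ones — in particular that $\pi^0=0$ is correctly accounted for on both sides — but once the Lemma is invoked this is routine, so I do not expect a genuine obstacle here.
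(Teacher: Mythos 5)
Your overall route is the right one, and it is essentially what the paper leaves implicit: the statement carries no written proof in the text, the whole content being the preceding Lemma, which you invoke correctly to kill the conjugation, after which only the family $(\pi^n)$ of group endomorphisms remains. The dictionary between morphisms on the two sides, the explicit inverse functor, and the check that the composites are the identity on the nose are all fine.

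There is, however, one step that fails as written, and it is exactly the point you flagged as ``the only place demanding any care.'' You assert that for a linear action $n\mapsto\alpha_n$ of $(\bbZ,\times)$ on an abelian group $A$ one automatically has $\alpha_0=0$ ``because $0$ is absorbing in $(\bbZ,\times)$.'' This is not true: absorption only yields $\alpha_0\circ\alpha_n=\alpha_0=\alpha_n\circ\alpha_0$, i.e.\ that $\alpha_0$ is an absorbing idempotent within the image of the action, not that it is the zero endomorphism of $A$. The action $\alpha_n=\id_A$ for all $n$ is a perfectly good monoid homomorphism $(\bbZ,\times)\to\operatorname{End}_{\bbZ}(A)$ with $\alpha_0=\id_A\neq 0$, and no object with $\alpha_0=0$ is isomorphic to it in the category of actions. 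So if ``linear action'' (equivalently, ``module over the monoid ring $\bbZ[(\bbZ,\times)]$'') is taken in the naive sense, your inverse functor does not land in power quandles --- the axiom $\pi^0(a)=e$ fails --- and the two categories are genuinely inequivalent. The repair is purely definitional but must be made explicit: either build $\alpha_0=0$ into what ``linear action'' means (the natural convention for a monoid with zero), or read ``the corresponding monoid ring'' as the contracted monoid ring, the quotient of $\bbZ[(\bbZ,\times)]$ identifying the basis element $[0]$ with the ring's zero. Once that convention is fixed, the rest of your argument goes through without change.
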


\begin{proposition}
The abelianization functor,~i.e., the left-adjoint to the forgetful functor from the category of abelian power quandles to the category of all power quandles, sends a power quandle~$P$ to~$\bbZ\Or(P)$, the free abelian group on the set~$\Or(P)$ of orbits of~$P$, with trivial conjugation and the induced power structure as in Example~\ref{ex:orbits}.
\end{proposition}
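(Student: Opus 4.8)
The plan is to check directly that $\bbZ\Or(P)$, with the stated structure, has the universal property of the abelianization: namely, to produce a morphism of power quandles $\eta_P\colon P\to\bbZ\Or(P)$ through which every morphism from $P$ to an abelian power quandle factors uniquely by a morphism of abelian power quandles. One should read $\bbZ\Or(P)$ here as the free abelian group on the pointed set $(\Or(P),[e])$, i.e.\ modulo the class of the orbit of~$e$; that orbit is the singleton $\{e\}$ (nothing can be equivalent to $e$, since $a\rhd b\sim b$ with either side equal to $e$ forces the other to be $e$ as well, using that $\lambda_a$ fixes $e$), and since morphisms of power quandles are unital its class must be sent to~$0$ in any case.

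First I would upgrade $\bbZ\Or(P)$ to an abelian power quandle. Its conjugation is declared trivial, which by the Lemma above is forced on any abelian power quandle. The $(\bbZ,\times)$--action on $\Or(P)$ from Example~\ref{ex:orbits} fixes the basepoint $[e]$: for $n\neq 0$ we have $\pi^n(e)=\pi^n(\pi^0(a))=\pi^{0}(a)=e$, while $\pi^0([a])=[\pi^0(a)]=[e]$ for all $a$. So this is an action of $(\bbZ,\times)$ on the pointed set $(\Or(P),[e])$ by pointed maps, and applying the free abelian group construction (with the basepoint sent to $0$) turns it into an action of $(\bbZ,\times)$ on $\bbZ\Or(P)$ by group endomorphisms, with $\pi^0$ the zero endomorphism because $\pi^0$ on $\Or(P)$ is the constant map at the basepoint. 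Thus $\bbZ\Or(P)$ is an abelian group with a linear $(\bbZ,\times)$--action and trivial conjugation, hence an abelian power quandle by the previous proposition.

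Next I would verify that $\eta_P\colon P\to\bbZ\Or(P)$, $a\mapsto[a]$ (the generator indexed by the orbit of $a$), is a morphism of power quandles: it sends $e$ to $[e]=0$; it respects conjugation because $a\rhd b\sim b$ gives $\eta_P(a\rhd b)=[b]=\eta_P(a)\rhd\eta_P(b)$ for the trivial conjugation of the target; and it respects the power maps since $\eta_P(\pi^n(a))=[\pi^n(a)]=\pi^n[a]=\pi^n\eta_P(a)$ by the very definition of the induced action. For the universal property, let $B$ be an abelian power quandle and $f\colon P\to B$ a morphism of power quandles. As $B$ has trivial conjugation, $f$ is constant on $\rhd$--orbits and sends $e$ to $0$, so it factors through a pointed map $g\colon(\Or(P),[e])\to(B,0)$; moreover $g$ is compatible with the $(\bbZ,\times)$--actions because $f$ is and $P\to\Or(P)$ is surjective. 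By the universal property of the free abelian group on a pointed set, $g$ extends uniquely to a group homomorphism $\overline{f}\colon\bbZ\Or(P)\to B$, and then $\overline{f}\circ\eta_P=f$. Finally $\overline{f}$ is a morphism of abelian power quandles: it is a homomorphism, trivially compatible with the conjugations, and for each $n$ the two group homomorphisms $\overline{f}\circ\pi^n$ and $\pi^n\circ\overline{f}$ from $\bbZ\Or(P)$ to $B$ agree on the generators, since $\overline{f}(\pi^n[a])=g([\pi^n(a)])=\pi^n g([a])=\pi^n\overline{f}([a])$, so they coincide. Uniqueness of $\overline{f}$ is immediate, as $\overline{f}\circ\eta_P=f$ already prescribes $\overline{f}$ on the generating set $\eta_P(P)$; naturality in $P$ is a routine check applied to $\Or$ and the free abelian group functor. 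This exhibits the asserted adjunction.

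I expect the only delicate point to be the treatment of $\pi^0$ together with the basepoint. The naive linear extension of the orbit power structure over the full set $\Or(P)$ would make $\pi^0$ send $\sum n_i[a_i]$ to $(\sum n_i)[e]$ rather than to $0$, so it is essential both to quotient out the class $[e]$ and to use that, on $\Or(P)$, $\pi^0$ is exactly the constant map at $[e]$ --- equivalently, to phrase the construction through pointed sets, where both adjustments happen automatically. Everything after that reduces to the observation that two group homomorphisms agreeing on a generating set are equal.
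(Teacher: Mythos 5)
Your argument is correct, and it is the natural one: the paper states this proposition without proof, and a direct verification of the universal property of the left adjoint is exactly what is called for. The one substantive point you raise is also right and worth keeping: taken literally, the free abelian group on all of $\Or(P)$ with the linearly extended power structure would have $\pi^0(\sum_i n_i[a_i])=(\sum_i n_i)[e]\neq 0$, which violates the abelian power quandle axiom $\pi^0=0$ forced by $\pi^0(a)=e=0$; so one must indeed read $\bbZ\Or(P)$ as the free abelian group on the pointed set $(\Or(P),[e])$, i.e.\ with the class of the orbit $\{e\}$ set to zero. Your observation that the orbit of $e$ is a singleton (because each $\lambda_a$ is a bijection fixing $e$) makes this a harmless normalization rather than a real quotient, and the rest of the verification --- that $a\mapsto[a]$ is a power quandle morphism and that every morphism to an abelian power quandle is constant on orbits and hence factors uniquely through a group homomorphism determined on generators --- is complete and correct.
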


This basic understanding of abelian power quandles is enough to develop the Quillen homology theory for power quandles by deriving the abelianization functor. We will not do this here and refer to~\cite{Szymik:3}, where one of us treated the case of plain quandles, without power structure.

%%%

\section{The forgetful functor}\label{sec:forgetful}

This section contains our results on the forgetful functor that assigns to each group its underlying power quandle as in Example~\ref{ex:underlying_group}.

\begin{theorem}\label{thm:central_quotient}
Groups~$G$ with isomorphic underlying power quandles have isomorphic central quotients~$G/\rmZ(G)$.
\end{theorem}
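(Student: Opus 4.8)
The plan is to characterize the center of $G$ purely in terms of the power quandle $\Pq(G)$, so that an isomorphism of power quandles carries center to center and descends to the central quotients. First I would observe that an element $z$ lies in $\rmZ(G)$ if and only if $\lambda_z = \id$, i.e. $z \rhd b = b$ for all $b$; this is immediate from $z \rhd b = zbz^{-1}$ and is manifestly a condition visible in the power quandle structure alone. Write $Z_{\Pq}(P)$ for the set of elements $a$ of a power quandle $P$ with $\lambda_a = \id_P$ (the ``quandle-theoretic center''); then $Z_{\Pq}(\Pq(G)) = \rmZ(G)$ as subsets of $G$. An isomorphism $\varphi\colon \Pq(G) \xrightarrow{\ \sim\ } \Pq(H)$ of power quandles conjugates $\lambda_a$ to $\lambda_{\varphi(a)}$, hence restricts to a bijection $\rmZ(G) = Z_{\Pq}(\Pq(G)) \to Z_{\Pq}(\Pq(H)) = \rmZ(H)$.

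Next I would show that $\varphi$ induces a well-defined bijection on the quotient sets $G/\rmZ(G) \to H/\rmZ(H)$, and then that this bijection is a group isomorphism. For well-definedness, I need: $a$ and $b$ lie in the same coset of $\rmZ(G)$ if and only if $\lambda_a = \lambda_b$ (equivalently $ab^{-1} \in \rmZ(G)$). The forward direction is clear; for the converse, if $\lambda_a = \lambda_b$ then $aba^{-1} \cdots$ — more precisely $a^{-1}b$ acts trivially by conjugation since $\lambda_{a^{-1}b} = \lambda_a^{-1}\lambda_b = \id$, where I use that $\lambda$ is a homomorphism $G \to \Aut(\Pq(G))$ and that inverses in the power quandle are given by $\pi^{-1}$, which $\varphi$ preserves. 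So the map ``$a \mapsto \lambda_a$'' identifies $G/\rmZ(G)$ with the subgroup $\Lambda(G) = \{\lambda_a : a \in G\}$ of $\Aut(\Pq(G))$, and similarly for $H$. Since $\varphi$ is an isomorphism of power quandles, conjugation by $\varphi$ is an isomorphism $\Aut(\Pq(G)) \to \Aut(\Pq(H))$ carrying $\lambda_a$ to $\lambda_{\varphi(a)}$, hence carrying $\Lambda(G)$ isomorphically onto $\Lambda(H)$. Composing the three isomorphisms $G/\rmZ(G) \cong \Lambda(G) \cong \Lambda(H) \cong H/\rmZ(H)$ gives the result.

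The one point demanding care — the main obstacle — is verifying that the assignment $a \mapsto \lambda_a$ really does realize $G/\rmZ(G)$ as the subgroup $\Lambda(G) = \Inn(G)$ of $\Aut(\Pq(G))$ as a \emph{group}, i.e. that the group structure on $\Inn(G)$ is recovered from the power quandle. The subtlety is that a priori $\Aut(\Pq(G))$ (automorphisms of the power quandle) could be strictly larger than $\Aut(G)$, so I must check that $\Lambda(G)$, \emph{as a subset}, is intrinsic and that its group law is the restriction of composition in $\Aut(\Pq(G))$ — which it visibly is, since $\lambda_a \lambda_b = \lambda_{ab}$ holds in $\Aut(\Pq(G))$ by the rack identity \eqref{eq:rack} together with the associativity encoded in $\pi^n(a) \rhd b = a \rhd^n b$. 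This identity $\lambda_{ab} = \lambda_a\lambda_b$ makes $\Lambda(G)$ a subgroup with multiplication and inversion inherited from $\Aut(\Pq(G))$, and both are preserved by conjugation by $\varphi$; so the transported subgroup $\varphi \Lambda(G) \varphi^{-1}$ equals $\Lambda(H)$ with its induced group structure. That settles the theorem. (I would also note in passing that this argument does not by itself produce an isomorphism $\rmZ(G) \cong \rmZ(H)$ of groups, only a bijection — which is exactly why the stronger statements about centers are deferred to Theorem~\ref{thm:center}.)
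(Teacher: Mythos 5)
Your proof is correct and rests on the same key observation as the paper's: a power-quandle isomorphism $\varphi$ satisfies $\varphi\lambda_a\varphi^{-1}=\lambda_{\varphi(a)}$, so it identifies the images $\Lambda(G)\leqslant\Aut(\Pq(G))$ and $\Lambda(H)\leqslant\Aut(\Pq(H))$ of $G$ and $H$ under $a\mapsto\lambda_a$, which are exactly $G/\rmZ(G)$ and $H/\rmZ(H)$; the paper packages the very same computation as a direct surjective homomorphism $G\to H/\rmZ(H)$ with kernel $\rmZ(G)$ instead of passing through the automorphism groups. One small slip worth fixing: the identity $\lambda_{ab}=\lambda_a\lambda_b$ does not follow from the rack axiom~\eqref{eq:rack} (which gives $\lambda_a\lambda_b=\lambda_{a\rhd b}\lambda_a$) nor from $\pi^n(a)\rhd b=a\rhd^n b$ --- it holds here simply because in $\Pq(G)$ the map $\lambda_a$ is conjugation by $a$ in the group $G$, which is all you need.
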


\begin{proof}
Let~$f\colon G\to H$ be a bijection between groups that is compatible with the power quandle structures on both sides. We would like to produce an bijection~\hbox{$G/\rmZ(G)\to H/\rmZ(H)$} that is compatible with the group structures on both sides. We first claim that the composition~\hbox{$G\to H\to H/\rmZ(H)$} of~$f$ with the canonical projection is a morphism of groups: Let~$a$ and~$b$ be any elements of the group~$G$. Using the relation~\hbox{$(xy)\rhd z=x\rhd(y\rhd z)$}, which holds in the underlying quandle of any group, and the fact that~$f$ is compatible with the power quandle structures, we compute
\begin{align*}
	f(ab)\rhd f(g)
	&=f((ab)\rhd g)\\
	&=f(a\rhd(b\rhd g))\\
	&=f(a)\rhd(f(b)\rhd f(g)))\\
	&=(f(a)f(b))\rhd f(g).
\end{align*}
Since~$f$ is surjective, this means that~$f(ab)\rhd h=(f(a)f(b))\rhd h$ for all~$h$ in~$H$. Then the conjugations by~$f(ab)$ and by~$f(a)f(b)$ are equal on~$H$, so that these elements differ by an element in the center~$\rmZ(H)$, proving the claim.

We now have a morphism~$G\to H/\rmZ(H)$ of groups that is surjective as a composition of two surjections. It suffices to show that the kernel is~$\rmZ(G)$. An element~$a$ lies in~$\rmZ(G)$ if and only if~$a\rhd g=g~$ for all~$g\in G$. Since~$f$ is an isomorphism of power quandles, this happens if and only if~$f(a)\rhd f(g)=f(a\rhd g)=f(g)$ for all~$g$. As above, since~$f$ is surjective, this is equivalent to~$f(a)\rhd h=h$ for all~$h$ in~$H$, that is~\hbox{$f(a)\in\rmZ(H)$}, and we are done.
\end{proof}

\begin{theorem}\label{thm:center}
Groups with isomorphic underlying power quandles have centers of the same cardinality. If these centers are finite, then they are even isomorphic.
\end{theorem}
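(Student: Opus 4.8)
The plan is to build on the proof of Theorem~\ref{thm:central_quotient}. Suppose $f\colon G\to H$ is a bijection compatible with the power quandle structures. From the previous theorem we already have an induced isomorphism $\bar f\colon G/\rmZ(G)\to H/\rmZ(H)$ of groups, and we know $f$ restricts to a bijection $\rmZ(G)\to\rmZ(H)$ of the underlying sets (an element is central exactly when its left-multiplication is trivial, a condition preserved by $f$ since it is a power quandle isomorphism). This already gives the cardinality statement: $|\rmZ(G)|=|\rmZ(H)|$ unconditionally. So the only content left is upgrading this set bijection to a \emph{group} isomorphism when the centers are finite.

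For the finite case, the key observation is that $f$ restricted to $\rmZ(G)$ is compatible with the power maps: $f(\pi^n(a))=\pi^n(f(a))$, i.e. $f(a^n)=f(a)^n$ for all $a\in\rmZ(G)$. So $f|_{\rmZ(G)}$ is a bijection $\rmZ(G)\to\rmZ(H)$ between finite abelian groups that commutes with all power operations $a\mapsto a^n$; in additive notation it is a bijection between finite abelian groups commuting with multiplication by every integer. The plan is to show that any such bijection between finite abelian groups forces the groups to be isomorphic. Concretely, a finite abelian group $A$ is isomorphic to $B$ if and only if for every positive integer $n$ the sets $\{a\in A: na=0\}$ and $\{b\in B: nb=0\}$ have the same cardinality (this determines, prime by prime, the multiset of cyclic factor orders via the standard counting of $p^k$-torsion). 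Since $f$ commutes with multiplication by $n$, it carries $\{a : na=0\}$ bijectively onto $\{b : nb=0\}$, so these cardinalities agree for all $n$, and hence $\rmZ(G)\cong\rmZ(H)$.

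I would organize the write-up as: (1) invoke Theorem~\ref{thm:central_quotient}'s proof to note $f(\rmZ(G))=\rmZ(H)$ as sets, giving equal cardinality; (2) record that $f$ respects the power maps on the centers; (3) state and prove the lemma that a bijection between finite abelian groups commuting with all multiplication-by-$n$ maps implies isomorphism, via the torsion-counting classification; (4) conclude. One subtlety to handle carefully in step (3): it suffices to use the bijections on $p^k$-torsion subgroups for prime powers $p^k$, and to note that the number of elements of order dividing $p^k$ in $\bigoplus_i \bbZ/p^{e_i}$ is $p^{\sum_i \min(k,e_i)}$, from which the partition $(e_i)$ is recovered by finite differencing. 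The main obstacle is really just making this classification argument clean and self-contained; there is no deep difficulty, since finiteness is exactly what makes "same torsion counts" equivalent to "isomorphic" — an equivalence that genuinely fails for infinite abelian groups, which is why the theorem only claims equal cardinality in general.
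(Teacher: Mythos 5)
Your proposal is correct and follows essentially the same route as the paper: identify the center via triviality of left-multiplication to get the cardinality statement, then use the power maps to show the torsion counts $|\{a: a^n=e\}|$ agree and invoke the classification of finite abelian groups. Your step (3) just spells out in more detail the counting argument the paper sketches.
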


\begin{example}
Before we embark on the proof, let us see an example that illustrates the necessity of some hypothesis in the statement. Consider the infinite abelian groups~$\bbZ^r$ of rank~$r$. These are pairwise non-isomorphic as groups. The power quandles can be described as follows. Let~$C_r$ be the set of tuples~$(a_1,\dots,a_r)$ of integers that are co-prime, which means~$\mathrm{gcd}(a_1,\dots,a_r)=(1)$, and satisfy the inequality~\hbox{$(a_1,\dots,a_r)\geqslant(0,\dots,0)$} in the lexicographic order. Then there is an isomorphism~$\bbZ^r\cong\bbZ\,\boxtimes\, C_r$ of power quandles, where for any set~$D$ we set~$\bbZ\,\boxtimes\, D=(\bbZ\times D)/\!\!\sim$, with~\hbox{$(0,x)\sim(0,y)$} the unit for all~$x,y\in D$, trivial conjugation $\rhd$, and~\hbox{$\pi^m[n,x]=[mn,x]$} for all~\hbox{$m,n\in\bbZ$}. The isomorphism sends~$[n,(a_1,\dots,a_r)]$ to~\hbox{$(n\cdot a_1,\dots,n\cdot a_r)$}. Since the isomorphism type of the power quandle~$\bbZ\,\boxtimes\, D$ only depends on the cardinality of~$D$, and~$C_r$ is countably infinite for all~$r\geqslant 2$, we see that the groups~$\bbZ^r$ with~$r\geqslant 2$ all have isomorphic underlying power quandles.
\end{example}

\begin{proof}[Proof of Theorem~\ref{thm:center}]
The center of a group is the set of elements~$a$ such that conjugation by~$a$ is the identity. Similarly, the center of a quandle is the set of elements~$a$ such that left-multiplication with~$a$ is the identity. It is straightforward to check that this subset is a sub-quandle, and it is preserved by the power structure, if present. This means that it does not matter if we compute the center of a group or of its underlying quandle, and groups with isomorphic power quandles have isomorphic centers{\it~as power quandles}. In particular, they have the same cardinality.

It remains to be shown that these centers are isomorphic{\it~as groups} if they are finite. In other words, because centers are abelian, we can assume that the groups in question are abelian, and we are left to show that finite abelian groups are determined by their power structures. 
%This must be well-known in some form or other, but we were unable to locate a reference. 
We indicate an argument for that: Given a finite abelian group~$A$, the power structure determines the cardinalities of the sets
\[
\{\,a\in A\,|\,a^n=e\,\}\cong\Hom(\bbZ/n,A)
\]
for all~$n\geqslant 1$. Now~$A$ itself is a finite product of finite cyclic groups, and the number and structure of the factors can easily be determined from the knowledge of all the cardinalities above~(see~\cite{McHaffey}, for example).
\end{proof}

Theorems~\ref{thm:central_quotient} and~\ref{thm:center} together show that two finite groups with isomorphic underlying power quandles share the same centers and central quotients. The problem which groups share with a given group~$G$ the same center and the same central quotient has already been raised by Baer~\cite{Baer} and is difficult. An obvious upper bound is given by the cohomology~\hbox{$\rmH^2(G/\rmZ(G);\rmZ(G))$} which classifies central extensions of~$G/\rmZ(G)$ with center containing~$\rmZ(G)$, but not necessarily equal to it. Standard examples are the dihedral and quaternion groups of order~$8$. These have non-isomorphism power quandles, though, because the dihedral group has five involutions, whereas the quaternion group has only one. The question whether finite groups are determined by their power quandles remains open. 

\begin{question}\label{q:forgetful}
Are there two non-isomorphic finite groups~$G$ and~$H$ that have isomorphic underlying power quandles?
\end{question}

It is clear from our results that counterexamples need to have a center that is a proper subgroup. A possible approach to answer Question~\ref{q:forgetful} is contained in Section~\ref{sec:image}.

\begin{example}
Recall that two groups are {\it isocategorical} if their complex representation categories are equivalent as monoidal categories. By definition, these groups are difficult to distinguish by means of their representation theory. The smallest examples have order~$64$. From the~267 isomorphism classes of groups of that order, these are the two groups that are named~$\Gamma_{26} a_2$ and~$\Gamma_{26} a_3$ by Hall and Senior~\cite{Hall--Senior}, or{\tt~SmallGroup(64,136)} and{\tt~SmallGroup(64,135)} in~GAP~\cite{GAP}, respectively. Still, these two groups are easily distinguished using their underlying power quandles: the numbers of conjugacy classes of involutions are different~(see also~\cite[Ex.~3.1]{Meir--Szymik}).
\end{example}

%%%

\section{The adjoint functor}\label{sec:adjoint}

The forgetful functor~$\Pq$ from the category of groups to the category of power quandles, has a left-adjoint. Namely, we can associate a group~$\Gr(P)$ to every power quandle~$(P,\rhd,\pi,e)$ as follows: The generators of the group~$\Gr(P)$ are elements~$\sigma(a)$, one for each element~$a$ in~$P$, and
\begin{align*}
	\sigma(a\rhd b)&=\sigma(a)\sigma(b)\sigma(a)^{-1},\\
	\sigma(\pi^n(a))&=\sigma(a)^n,\\
	\sigma(e)&=e.
\end{align*}
are the relations, where~$a,b\in P$ and~$n\in\bbZ$. The group~$\Gr(P)$ has the universal property that every morphism from~$P$ into the power quandle~$\Pq(H)$ of a group~$H$ extends uniquely to a morphism of groups from~$\Gr(P)$ to~$H$. In other words, the functor~$\Gr$ is left-adjoint to the forgetful functor~$\Pq$.

The following result will be used in the proof of Theorem~\ref{thm:kernel}.

\begin{lemma}\label{lem:adjoints_commute}
For every power quandle~$P$, the abelianization~$\Ab\Gr(P)$ of the group~$\Gr(P)$ is the quotient of the free abelian group~$\bbZ\Or(P)$ on the set~$\Or(P)$ of orbits of~$P$ by the relations~$n[a]=[\pi^n(a)]$ for all~$a$ in~$P$ and all integers~$n$.
\end{lemma}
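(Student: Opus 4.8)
The plan is to compute $\Ab\Gr(P)$ directly from a presentation. The group $\Gr(P)$ is presented by generators $\sigma(a)$, one for each $a\in P$, subject to the three families of relations $\sigma(a\rhd b)=\sigma(a)\sigma(b)\sigma(a)^{-1}$, $\sigma(\pi^n(a))=\sigma(a)^n$, and $\sigma(e)=e$. Abelianization is a colimit and commutes with the formation of quotients by relations, so $\Ab\Gr(P)$ is presented as an abelian group by the generators $[a]:=\overline{\sigma(a)}$, $a\in P$, modulo the abelianized images of the defining relations. First I would dispose of the conjugation relations: in an abelian group $\sigma(a)\sigma(b)\sigma(a)^{-1}$ maps to $[b]$, so the first family becomes $[a\rhd b]=[b]$ for all $a,b\in P$. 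These are exactly the relations that collapse $\bbZ P$ (the free abelian group on the underlying set of $P$) onto $\bbZ\Or(P)$, since by definition $\Or(P)$ is the quotient of $P$ by the equivalence relation generated by $a\rhd b\sim b$. So after imposing only the first family we obtain the free abelian group $\bbZ\Or(P)$ on orbits, with the class of $a\in P$ being the basis element $[a]$ indexed by its orbit.

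Next I would push the remaining two families of relations through this quotient. The relation $\sigma(e)=e$ abelianizes to $[e]=0$; but $e$ is fixed, $a\rhd e=e$, equivalently $\pi^0(a)=e$, so this is the $n=0$ instance of the power relations and need not be recorded separately. The power relations $\sigma(\pi^n(a))=\sigma(a)^n$ abelianize to $[\pi^n(a)]=n[a]$ for all $a\in P$ and $n\in\bbZ$; since the orbit of $\pi^n(a)$ depends only on the orbit of $a$ by \eqref{eq:compatible_with_action} (cf.\ Example~\ref{ex:orbits}), these descend to well-defined relations on $\bbZ\Or(P)$. Therefore $\Ab\Gr(P)$ is the quotient of $\bbZ\Or(P)$ by the subgroup generated by the elements $[\pi^n(a)]-n[a]$, which is precisely the asserted description.

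The only point requiring a little care — and the step I expect to be the main (minor) obstacle — is the bookkeeping of presentations under successive quotients: one must check that imposing the conjugation relations first and the power relations afterwards yields the same abelian group as imposing all of them simultaneously, and that the power relations are genuinely well-defined on $\bbZ\Or(P)$ rather than merely on $\bbZ P$. Both are routine: quotients of a group by two sets of relations can be formed in either order, and well-definedness on orbits is exactly the content of the compatibility axiom~\eqref{eq:compatible_with_action}, which guarantees $[\pi^n(a)]=[\pi^n(b)]$ whenever $[a]=[b]$ in $\Or(P)$. With these checks in place the identification is immediate.
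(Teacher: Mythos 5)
Your argument is correct and is exactly the paper's approach: the paper's proof is a one-line appeal to the presentation of $\Gr(P)$, noting that the conjugation relations collapse the generators onto $\bbZ\Or(P)$ and the power relations give $n[a]=[\pi^n(a)]$, which is precisely what you spell out (including the observation that $\sigma(e)=e$ is the $n=0$ case and that well-definedness on orbits follows from~\eqref{eq:compatible_with_action}). No gaps; your version is simply a more detailed write-up of the same computation.
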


\begin{proof}
This follows immediately from the presentation of~$\Gr(P)$ given above. The first relation shows that~$\Ab\Gr(P)$ is a quotient of~$\bbZ\Or(P)$, and the other ones give the indicated relations.
\end{proof}

%%%

If~$G$ is any group, we would like to reconstruct it from its power quandle, and the means we have is the canonical surjective morphism~$\epsilon\colon\Gr\Pq(G)\to G$; it sends the generator~$\sigma(a)$ to~$a$. The assignment~\hbox{$\sigma\colon a\mapsto\sigma(a)$} is a set-theoretical section,{\it~normalized} in the sense that~$\sigma(e)$ is the unit, but it is not necessarily a morphism of groups. In the rest of this section, we describe precisely how far off~$G$ the approximation~$\Gr\Pq(G)$ is.

\begin{theorem}\label{thm:central}
For every group~$G$, the kernel of~$\epsilon\colon\Gr\Pq(G)\to G$ lies in the center, so that the group~$\Gr\Pq(G)$ is displayed as a central extension of~$G$ with an abelian kernel.
\end{theorem}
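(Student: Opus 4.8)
The goal is to show that $\Ker(\epsilon)$ is central in $\Gr\Pq(G)$, where $\epsilon\colon\Gr\Pq(G)\to G$ sends $\sigma(a)$ to $a$. The plan is to exploit the defining relation $\sigma(a\rhd b)=\sigma(a)\sigma(b)\sigma(a)^{-1}$ of $\Gr\Pq(G)$ together with the fact that conjugation by $\epsilon(x)$ inside $G$ is realized, on the image of $\sigma$, by conjugation by $x$ inside $\Gr\Pq(G)$. Concretely: since the generators $\sigma(a)$ generate $\Gr\Pq(G)$, it suffices to check that every element $k\in\Ker(\epsilon)$ commutes with each $\sigma(a)$. Fix such a $k$. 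I would use the section $\sigma$ and write $k$ as a word in the generators; applying $\epsilon$ to $\sigma(a)k\sigma(a)^{-1}$ gives $\epsilon(\sigma(a))\,e\,\epsilon(\sigma(a))^{-1}=e$ in $G$, so the conjugate $\sigma(a)k\sigma(a)^{-1}$ again lies in $\Ker(\epsilon)$. That alone does not give centrality, so the real point must be extracted differently.

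The cleaner route, and the one I would actually carry out, is to observe that in $\Gr(P)$ for \emph{any} power quandle $P$ the element $\sigma(a)$ conjugates $\sigma(b)$ to $\sigma(a\rhd b)$; hence for a word $w=\sigma(b_1)^{\pm1}\cdots\sigma(b_r)^{\pm1}$ we get $\sigma(a)\,w\,\sigma(a)^{-1}=\sigma(a\rhd b_1)^{\pm1}\cdots\sigma(a\rhd b_r)^{\pm1}$, i.e. conjugation by $\sigma(a)$ acts on words by applying the quandle automorphism $\lambda_a$ letterwise. Now take $P=\Pq(G)$ and let $k\in\Ker(\epsilon)$, represented by such a word $w$ with $b_1\cdots b_r^{\pm1}=e$ in $G$ (with the signs). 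For a generator $\sigma(a)$ we then have $\sigma(a)\,k\,\sigma(a)^{-1}=\sigma(a\rhd b_1)^{\pm1}\cdots\sigma(a\rhd b_r)^{\pm1}$. Apply the \emph{other} generator $\sigma(a^{-1})$-type conjugation, or better: I want to compare this with $k$ itself. The key identity to prove is that $\sigma(a\rhd b_1)^{\pm1}\cdots\sigma(a\rhd b_r)^{\pm1}$ equals $\sigma(a)k\sigma(a)^{-1}$ on one hand and, since $\lambda_a$ in $\Pq(G)$ is conjugation by $a$ and $a\,(b_1^{\pm1}\cdots b_r^{\pm1})\,a^{-1}=a\cdot e\cdot a^{-1}=e$, this conjugated word still maps to $e$; but to conclude it \emph{equals} $k$ I need that $\Gr\Pq(G)\to G$ has the property that two lifts of $e$ coming from ``conjugate'' words agree — which is false in general. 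So this still is not it.

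The argument that does work: first establish, by Theorem \ref{thm:central_quotient} applied to the identity power-quandle isomorphism, that $\epsilon$ induces an isomorphism $\Gr\Pq(G)/\rmZ(\Gr\Pq(G))\xrightarrow{\ \cong\ }G/\rmZ(G)$. More directly, I would prove the statement as follows. Let $N=\Ker(\epsilon)$. For $a\in P=\Pq(G)$, conjugation by $\sigma(a)$ on $\Gr\Pq(G)$ restricts to an automorphism of $N$ (as $N$ is normal), and on the quotient $G=\Gr\Pq(G)/N$ it induces conjugation by $a$. Consider the commutator map $c\colon N\times\Gr\Pq(G)\to\Gr\Pq(G)$, $c(n,x)=nxn^{-1}x^{-1}$; its image lies in $N$ since $\epsilon(c(n,x))=e$. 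Fixing $n\in N$, the map $x\mapsto c(n,x)$ is a crossed homomorphism $\Gr\Pq(G)\to N$ for the conjugation action; I claim it is trivial. Evaluate on generators: $c(n,\sigma(a))=n\,\sigma(a)\,n^{-1}\,\sigma(a)^{-1}$. Now use that $n$ is a product of \emph{relators} of the presentation of $\Gr\Pq(G)$ as a quotient of the free group on $\{\sigma(a)\}$ modulo the relations $\sigma(a\rhd b)\sigma(a)\sigma(b)^{-1}\sigma(a)^{-1}$, $\sigma(\pi^n a)\sigma(a)^{-n}$, $\sigma(e)$ — this is exactly where one invokes that $\epsilon$ is the \emph{co-unit}, identifying $N$ with the normal closure of the additional relators needed to pass from $\Gr\Pq(G)$ to $G$, which are generated by elements of the form $\sigma(ab)\sigma(b)^{-1}\sigma(a)^{-1}$ for $a,b\in G$. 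The main obstacle, and the heart of the proof, is the computation that each such generating relator $r_{a,b}=\sigma(ab)\sigma(a)^{-1}\sigma(b)^{-1}$-type element commutes with every $\sigma(c)$ modulo elements already known to be central: here one uses the quandle law $(xy)\rhd z=x\rhd(y\rhd z)$, the axiom $\pi^n(a)\rhd b=a\rhd^n b$, and the action-compatibility \eqref{eq:compatible_with_action} to verify $\sigma(c)\,r_{a,b}\,\sigma(c)^{-1}=r_{c\rhd a,\,c\rhd b}$ up to reordering, and $r_{c\rhd a,c\rhd b}$ differs from $r_{a,b}$ by a product of relators that are manifestly in the kernel — an induction on word length in $G$ then closes the loop. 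Once each generator of $N$ is shown central, $N$ itself is central since the center is a subgroup, and the theorem follows.
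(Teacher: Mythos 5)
Your final argument has a genuine gap at its crucial step. You correctly identify $\Ker(\epsilon)$ with the normal closure of the relators $r_{a,b}=\sigma(ab)\sigma(b)^{-1}\sigma(a)^{-1}$, and your computation $\sigma(c)\,r_{a,b}\,\sigma(c)^{-1}=r_{c\rhd a,\,c\rhd b}$ is correct. But to conclude that $r_{a,b}$ is central you then need $r_{c\rhd a,\,c\rhd b}=r_{a,b}$ in $\Gr\Pq(G)$, and your justification --- that the two ``differ by a product of relators that are manifestly in the kernel'' plus ``an induction on word length'' --- is vacuous: every $r_{x,y}$ lies in the kernel, so of course they differ by kernel elements; the question is whether they are \emph{equal}, and that equality is literally the identity $\sigma(c)\,r_{a,b}\,\sigma(c)^{-1}=r_{a,b}$ you set out to prove. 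The argument is circular as written. (The earlier appeal to Theorem~\ref{thm:central_quotient} is also unjustified: $\Pq(G)$ and $\Pq(\Gr\Pq(G))$ are not known to be isomorphic; the map $\sigma$ between them is not surjective in general.)

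The fix is to conjugate in the other direction: show that $r_{a,b}$ acts trivially by conjugation on each generator $\sigma(c)$. Using $\sigma(x)^{\pm1}\sigma(y)\sigma(x)^{\mp1}=\sigma(x^{\pm1}\rhd y)$ (which requires $\sigma(x^{-1})=\sigma(x)^{-1}$, i.e.\ the relation $\sigma(\pi^{-1}(x))=\sigma(x)^{-1}$), one computes
\[
r_{a,b}\,\sigma(c)\,r_{a,b}^{-1}
=\sigma\bigl((ab)\rhd(b^{-1}\rhd(a^{-1}\rhd c))\bigr)
=\sigma(c),
\]
because $(xy)\rhd z=x\rhd(y\rhd z)$ holds in $\Pq(G)$. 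Since the $\sigma(c)$ generate $\Gr\Pq(G)$, each $r_{a,b}$ is central, and hence so is their normal closure $\Ker(\epsilon)$. This is, in substance, the paper's proof: there the same computation is packaged as the claim that $a\mapsto(\tau\mapsto\sigma(a)\tau\sigma(a)^{-1})$ is a group homomorphism $G\to\Aut(\Gr\Pq(G))$, checked on generators $\tau=\sigma(c)$ via $(ab)\rhd c=a\rhd(b\rhd c)$ and extended to all $\tau$ by induction on word length; the inner-automorphism map of $\Gr\Pq(G)$ then factors through $\epsilon$, so $\Ker(\epsilon)$ lies in the center. Your second paragraph already contained the right ingredient (conjugation by a word acts letterwise through $\lambda$); you abandoned it one step too early.
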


\begin{proof}
As mentioned before, the assignment~$a\mapsto\sigma(a)$ in the inverse direction is not necessarily a morphism of groups. Still, we claim that the map~\hbox{$G\to\Aut(\Gr\Pq(G))$} that sends~$a$ to the conjugation with~$\sigma(a)$ is a morphism of groups. In symbols, given~$a,b$ in~$G$, we need to show that
\begin{equation}\label{eq:tau}
\sigma(ab)\tau\sigma(ab)^{-1}=\sigma(a)\sigma(b)\tau\sigma(b)^{-1}\sigma(a)^{-1}
\end{equation}
for all~$\tau$ in~$\Gr\Pq(G)$. This is trivial if~$\tau$ is the unit. If~$\tau=\sigma(c)$ is a generator, it follows from the compatibility of~$\sigma$ with conjugation, which is the first defining relation in the group~$\Gr\Pq(G)$. Inverting~\eqref{eq:tau} shows that it holds for inverses, and multiplying~\eqref{eq:tau} for two elements~$\tau$ and~$\tau'$ shows that it holds for products, proving the claim by induction.

As a consequence, the morphism~$\Gr\Pq(G)\to\Aut(\Gr\Pq(G))$ that sends~$\sigma(a)$ to the conjugation with~$\sigma(a)$ factors through~$\epsilon$. This means its kernel, which is the center of the group~$\Gr\Pq(G)$, contains the kernel of~$\epsilon$.
\end{proof}

%%%

We see that we can present any group~$G$ as a central extension
\[
\rmA(G)\longrightarrow\Gr\Pq(G)\longrightarrow G
\]
with an abelian kernel~$\rmA(G)$. In the following, we will characterize this extension by a universal property, and explain how to compute~$\rmA(G)$ from~$\Pq(G)$ and homological information about the group~$G$. The starting point is again the observation that~$\Gr\Pq(G)\to G$ comes with a canonical normalized set-theoretic section~\hbox{$g\mapsto\sigma(g)$}. This section is not a morphism of groups, in general, but--by construction--it is always a morphism of the underlying power quandles. It turns out that we are facing the universal object with these properties:

\begin{theorem}\label{thm:universal}
Let~$p\colon E\to G$ be a central extension of a group~$G$ together with a normalized section~\hbox{$s\colon G\to E$} that is a morphism of power quandles. There is a unique morphism~\hbox{$\Gr\Pq G\to E$} of~(central) extensions that commutes with the normalized sections.
\end{theorem}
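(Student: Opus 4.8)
The plan is to use the adjunction and the universal property of $\Gr$ directly. Recall that $\Gr$ is left-adjoint to $\Pq$, so a morphism of groups $\Gr\Pq G\to E$ is the same thing as a morphism of power quandles $\Pq G\to\Pq E$. The section $s\colon G\to E$ is, by hypothesis, exactly such a morphism of power quandles (its source is $\Pq G$, its target $\Pq E$), so it corresponds under the adjunction to a unique morphism of groups $\phi\colon\Gr\Pq G\to E$. Explicitly, $\phi$ is determined on generators by $\phi(\sigma(g))=s(g)$; this respects the defining relations of $\Gr\Pq G$ precisely because $s$ is a power quandle morphism.

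Next I would check that $\phi$ is a morphism of extensions, i.e. that $p\circ\phi=\epsilon$. Both $p\circ\phi$ and $\epsilon$ are morphisms of groups $\Gr\Pq G\to G$, so it suffices to compare them on the generators $\sigma(g)$. On the one hand $\epsilon(\sigma(g))=g$; on the other hand $p(\phi(\sigma(g)))=p(s(g))=g$ since $s$ is a section of $p$. Hence $p\circ\phi=\epsilon$. Moreover $\phi$ commutes with the normalized sections by construction: $\phi(\sigma(g))=s(g)$ is exactly the statement that the triangle relating $\sigma$ and $s$ via $\phi$ commutes.

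Finally I would prove uniqueness. Suppose $\psi\colon\Gr\Pq G\to E$ is another morphism of extensions commuting with the sections. Commuting with the sections forces $\psi(\sigma(g))=s(g)=\phi(\sigma(g))$ for all $g\in G$. Since the elements $\sigma(g)$ generate $\Gr\Pq G$ as a group, and both $\psi$ and $\phi$ are group homomorphisms, they agree everywhere, so $\psi=\phi$.

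The argument is essentially formal, so there is no serious obstacle; the one point that requires a moment's care is making sure the hypotheses on $s$ are used in the right places. Being a morphism of power quandles is what lets $s$ factor through $\Gr\Pq G$ via the adjunction (equivalently, respect the relations of the presentation); being a \emph{section} of $p$ is what makes $\phi$ a morphism \emph{of extensions}; and being \emph{normalized} guarantees $\phi(\sigma(e))=s(e)=e$, consistent with the relation $\sigma(e)=e$ in $\Gr\Pq G$. Centrality of the extension $p\colon E\to G$ is not actually needed for existence and uniqueness of $\phi$; it is relevant only in that, by Theorem~\ref{thm:central}, $\Gr\Pq G\to G$ is itself such a central extension, so this universal property genuinely characterizes it within the class of central extensions with a power-quandle section.
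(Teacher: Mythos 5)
Your proof is correct and follows essentially the same route as the paper: existence comes from the adjunction (a power quandle morphism $\Pq G\to\Pq E$ extends uniquely to a group morphism $\Gr\Pq G\to E$ sending $\sigma(g)$ to $s(g)$), and uniqueness comes from the fact that the $\sigma(g)$ generate $\Gr\Pq G$. You spell out the compatibility $p\circ\phi=\epsilon$ on generators, which the paper leaves implicit, but the argument is the same.
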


\begin{proof}
Since the group~$\Gr\Pq G$ is generated by the elements~$\sigma(g)$, and the requirements imply~\hbox{$\sigma(g)\mapsto s(g)$}, there is at most one such morphism. Conversely, since the map~\hbox{$g\mapsto s(g)$} is a morphism of power quandles, it extends~(uniquely) to a morphism of groups~$\Gr\Pq G\to E$ with~\hbox{$\sigma(g)\mapsto s(g)$}.
\end{proof}

%%%

Our following computation of~$\rmA(G)$ from~$\Pq(G)$ and homological information about the group~$G$ is motivated by the well-known classification of (central) extensions of a given group~$G$ with a given abelian kernel~$A$ in terms of the cohomology of~$G$ with coefficients in~$A$. In fact, the classification usually starts by classifying such extensions{\it~together with a given normalized set-theoretic section} in terms of~$2$--cocycles, which are functions~\hbox{$f\colon G\times G\to A$} satisfying certain axioms. This suggests to look at the{\it~universal} central  extension
\begin{equation}\label{eq:universal}
\rmU(G)\longrightarrow\rmE(G)\longrightarrow G,
\end{equation}
where the abelian group~$\rmU(G)$ is the quotient of the free abelian group on symbols~$u(g,h)$, for~$g,h$ in~$G$, modulo the relations that ensure that~$(g,h)\mapsto u(g,h)$ is a normalized~$2$--cocycle. The abelian group~$\rmU(G)$ represents~$2$--cocycles in the sense that we have natural bijections~$\Hom(\rmU(G),A)\cong\rmZ^2(G;A)$. One can use the bar resolutions to see that there is an exact sequence
\[
0 \longrightarrow \rmH_2(G;\bbZ) \longrightarrow \rmU(G) \longrightarrow \rmI(G) \longrightarrow \rmH_1(G;\bbZ)\longrightarrow 0
\]
of abelian groups, where~$\rmI(G)=\Ker(\bbZ G\to\bbZ)$ is the augmentation ideal of the group ring~$\bbZ G$. The details are not important here; what matters for us is how we can modify such a description to fit when we require the normalized set-theoretic sections to be morphisms of power quandles instead of arbitrary maps.

For instance, if we would ask for the normalized section to be a morphism of groups, then the corresponding kernel would be trivial. The universal such extension is~$\id\colon G\to G$, because any central extension~$E\to G$ with a normalized section that is a morphism of groups has a unique map from~$G$ that makes the obvious diagram commute: it is given by the section.

In our situation~\hbox{$\Gr\Pq G\to G$}, it is unlikely that the group~$\rmU(G)$ is the kernel. Instead, we get a quotient~$\rmA(G)$ of~$\rmU(G)$ by relations that ensure that the normalized section we get for the induced extension is a morphism of power quandles, or at least compatible with conjugation and inversion, such as~\hbox{$u(g,h)+u(gh,g^{-1})=0$}. While this generators-and-relations approach to describing~$\rmA(G)$ is possible in principle, the following homological approach reaches a cleaner statement faster:

\begin{theorem}\label{thm:kernel}
The kernel~$\rmA(G)$ of the extension~\hbox{$\Gr\Pq(G)\to G$} sits in an exact sequence
\[
\rmH_2(G;\bbZ)\longrightarrow
\rmA(G)\longrightarrow
\rmB(G)\longrightarrow
\rmH_1(G;\bbZ)\longrightarrow
0
\]
of abelian groups, where~$\rmB(G)$ is the quotient of the free abelian group~$\bbZ\Cl(G)$  with basis the conjugacy classes of elements in the group~$G$ modulo the relations~\hbox{$n[a]=[a^n]$} for all~$a$ in~$G$ and all integers~$n$.
\end{theorem}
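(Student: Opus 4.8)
The plan is to read off the asserted exact sequence as a truncation of the five-term exact sequence in integral group homology attached to the central extension
\[
1\longrightarrow\rmA(G)\longrightarrow\Gr\Pq(G)\longrightarrow G\longrightarrow 1
\]
furnished by Theorem~\ref{thm:central}. Two preliminary identifications are needed. First, for the middle term: Lemma~\ref{lem:adjoints_commute} says the abelianization of~$\Gr\Pq(G)$ is the quotient of~$\bbZ\Or\Pq(G)$ by the relations~$n[a]=[\pi^n(a)]$, and since~$\Or\Pq(G)=\Cl(G)$ with~$\pi^n(a)=a^n$ (Example~\ref{ex:orbits}), this quotient is precisely~$\rmB(G)$; thus~$\rmB(G)\cong\rmH_1(\Gr\Pq(G);\bbZ)$. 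Second, for the coefficient module: $\rmA(G)$ is abelian, so~$\rmH_1(\rmA(G);\bbZ)=\rmA(G)$, and since the extension is central (Theorem~\ref{thm:central} again) the conjugation action of~$G$ on~$\rmA(G)$ is trivial, whence the group of coinvariants~$\rmH_0(G;\rmA(G))$ is~$\rmA(G)$ itself.

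With these in hand, I would invoke the five-term exact sequence (Stallings--Stammbach; equivalently the low-degree edge sequence of the Lyndon--Hochschild--Serre spectral sequence) for the extension above,
\[
\rmH_2(\Gr\Pq(G);\bbZ)\longrightarrow\rmH_2(G;\bbZ)\longrightarrow\rmH_0(G;\rmA(G))\longrightarrow\rmH_1(\Gr\Pq(G);\bbZ)\longrightarrow\rmH_1(G;\bbZ)\longrightarrow 0,
\]
and substitute~$\rmH_0(G;\rmA(G))=\rmA(G)$ and~$\rmH_1(\Gr\Pq(G);\bbZ)=\rmB(G)$ from the two identifications. Deleting the leftmost term---which is harmless, as the statement asserts no exactness at~$\rmH_2(G;\bbZ)$, nothing standing to its left---yields exactly the four-term exact sequence of the theorem. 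I would also record that the maps are the evident ones: $\rmB(G)\to\rmH_1(G;\bbZ)$ is induced by the counit~$\epsilon$, the map~$\rmA(G)\to\rmB(G)$ by the inclusion~$\rmA(G)\hookrightarrow\Gr\Pq(G)$ followed by abelianization, and~$\rmH_2(G;\bbZ)\to\rmA(G)$ is the transgression.

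I do not expect a serious obstacle: the argument is essentially the standard five-term sequence, and the real content is the pair of bookkeeping identifications above, the less obvious of which---the collapse of the coinvariants term---is exactly where Theorem~\ref{thm:central} enters (without centrality one would be left with~$\rmA(G)_G$ in place of~$\rmA(G)$). If one prefers to avoid the spectral sequence altogether, the same four-term sequence can be extracted from the generators-and-relations presentation of~$\rmA(G)$ as a quotient of~$\rmU(G)$ sketched before the theorem, combined with the exact sequence~$0\to\rmH_2(G;\bbZ)\to\rmU(G)\to\rmI(G)\to\rmH_1(G;\bbZ)\to 0$; this route is more computational, and I would confine it to a remark.
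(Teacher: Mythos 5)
Your argument is exactly the paper's: both invoke the five-term exact sequence for the central extension, use centrality (Theorem~\ref{thm:central}) to identify the coinvariants term with~$\rmA(G)$, and use Lemma~\ref{lem:adjoints_commute} together with~$\Or\Pq(G)=\Cl(G)$ to identify~$\rmH_1(\Gr\Pq(G);\bbZ)$ with~$\rmB(G)$. The proposal is correct and matches the paper's proof in every essential respect.
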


\begin{proof}
Given any extension~$A\to E\to G$ of groups with abelian kernel~$A$, the five-term exact sequence~\cite[II.5,~Ex.~6]{Brown} gives an exact sequence
\[
\rmH_2(G;\bbZ)\longrightarrow
\rmH_0(G;A)\longrightarrow
\rmH_1(E;\bbZ)\longrightarrow
\rmH_1(G;\bbZ)\longrightarrow
0
\]
of abelian groups. When the extension is central, as in the case that we are interested in, the~$G$--action on~$A$ is trivial, and for the co-invariants we have~\hbox{$\rmH_0(G;A)=A$}. It remains to compute~$\rmH_1(E;\bbZ)$ for the group~\hbox{$E=\Gr\Pq(G)$}. Since the first homology is abelianization, this is~$\Ab\Gr\Pq(G)$. From Lemma~\ref{lem:adjoints_commute}, applied to~$P=\Pq(G)$, we get the description of that group as the~$\rmB(G)$ in the statement.
\end{proof}

\begin{remark}
The image of the Schur multiplier~$\rmH_2(G;\bbZ)$ inside the group~$\rmA(G)$ is clearly given by~$\rmA(G)\cap[E,E]$, where~\hbox{$E=\Gr\Pq(G)$} is as in the preceding proof. This description echoes Hopf's formula~\hbox{$\rmH_2(G;\bbZ)=(R\cap [F,F])/[R,F]$} when the group~$G$ is presented as a quotient~$G=F/R$ with~$F$ a free group:~\hbox{$[\rmA(G),E]=0$} because~$\rmA(G)$ is central in~$E$.
\end{remark}

\begin{corollary}\label{cor:finite}
If~$G$ is finite, so is~$\Gr\Pq(G)$.
\end{corollary}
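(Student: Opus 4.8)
The plan is to extract finiteness of $\Gr\Pq(G)$ from the central extension
\[
\rmA(G)\longrightarrow\Gr\Pq(G)\longrightarrow G
\]
by showing that both the quotient $G$ and the kernel $\rmA(G)$ are finite; a central extension of a finite group by a finite abelian group is of course finite. The quotient is finite by hypothesis, so the real content is the finiteness of $\rmA(G)$, and for this I would invoke Theorem~\ref{thm:kernel}.

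First I would recall the exact sequence
\[
\rmH_2(G;\bbZ)\longrightarrow
\rmA(G)\longrightarrow
\rmB(G)\longrightarrow
\rmH_1(G;\bbZ)\longrightarrow
0
\]
from Theorem~\ref{thm:kernel}. The last term $\rmH_1(G;\bbZ)=\Ab(G)$ is a quotient of the finite group $G$, hence finite. The Schur multiplier $\rmH_2(G;\bbZ)$ of a finite group is finite — this is classical, e.g. because $\rmH_2(G;\bbZ)$ is a subquotient of the finitely generated abelian group $\rmU(G)$ appearing in~\eqref{eq:universal}, or simply because the homology of a finite group in each positive degree is finite. Exactness then shows that $\rmA(G)$ is an extension of a subgroup of the finite group $\rmH_1(G;\bbZ)$ — namely the image of $\rmB(G)\to\rmH_1(G;\bbZ)$'s complement, more precisely $\rmA(G)$ surjects onto the image of $\rmA(G)\to\rmB(G)$, which is the kernel of $\rmB(G)\to\rmH_1(G;\bbZ)$ — by a quotient of the finite group $\rmH_2(G;\bbZ)$. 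So it remains only to see that the kernel of $\rmB(G)\to\rmH_1(G;\bbZ)$ is finite, and for that it suffices that $\rmB(G)$ itself is finite.

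Thus the crux is: \emph{$\rmB(G)$ is finite when $G$ is finite}. Here $\rmB(G)$ is the quotient of the free abelian group $\bbZ\Cl(G)$ on the (finitely many) conjugacy classes of $G$ by the relations $n[a]=[a^n]$ for all $a\in G$ and $n\in\bbZ$. Since $\Cl(G)$ is finite, $\bbZ\Cl(G)$ is finitely generated, so $\rmB(G)$ is a finitely generated abelian group and I only need to show it is torsion. For each element $a\in G$ of order $m$, the relation with $n=m$ gives $m[a]=[a^m]=[e]=0$ in $\rmB(G)$, using that $e$ is the unit of the power quandle and $[e]$ maps to $0$ in $\Ab\Gr\Pq(G)$ (equivalently $\sigma(e)=e$ in $\Gr\Pq(G)$). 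Hence every generator $[a]$ of $\rmB(G)$ is killed by the order of $a$, so $\rmB(G)$ is a finitely generated torsion abelian group, therefore finite.

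Assembling the pieces: $\rmB(G)$ is finite, so its subgroup $\ker(\rmB(G)\to\rmH_1(G;\bbZ))$ is finite; $\rmH_2(G;\bbZ)$ is finite, so its quotient mapping onto $\ker(\rmA(G)\to\rmB(G))$ is finite; exactness of the four-term sequence exhibits $\rmA(G)$ as an extension of the former by the latter, hence $\rmA(G)$ is finite. Then $\Gr\Pq(G)$ is a central extension of the finite group $G$ by the finite group $\rmA(G)$ and is therefore finite. I do not anticipate a genuine obstacle here; the one point deserving a careful sentence is that $[e]=0$ in $\rmB(G)$, which is what makes each $[a]$ torsion — this follows since $\rmB(G)=\Ab\Gr\Pq(G)$ by the proof of Theorem~\ref{thm:kernel} and $\sigma(e)=e$ is a defining relation of $\Gr\Pq(G)$.
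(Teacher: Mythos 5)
Your proof is correct and follows essentially the same route as the paper: finiteness of $\rmB(G)$ (finitely generated because $\Cl(G)$ is finite, torsion because $m[a]=[a^m]=[e]=0$ for $a$ of order $m$), finiteness of $\rmH_1(G;\bbZ)$ and $\rmH_2(G;\bbZ)$, and then the exact sequence of Theorem~\ref{thm:kernel} to conclude that $\rmA(G)$ and hence $\Gr\Pq(G)$ are finite. You merely spell out the torsion argument and the diagram chase that the paper leaves implicit.
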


\begin{proof}
If~$G$ is finite, so is~$\rmB(G)$. This is clear because~$\rmB(G)$ is a finitely generated abelian group whose elements are of finite order. Since also the homology groups~$\rmH_n(G;\bbZ)$ are finite for all~$n\geqslant 1$, we infer that~$\rmA(G)$ is finite as well, and then so must be~$\Gr\Pq(G)$.
\end{proof}

%%%

\section{The essential image}\label{sec:image}

This final section contains our results on the essential image of the functor~$\Gr$.

\begin{definition}\label{def:pq-gen}
Let~$G$ be a group. We call it {\it pq-generated} when there is a power quandle~$P$ such that~$G\cong\Gr(P).$
\end{definition}

Intuitively, a group is pq-generated if it can be presented using generators and relations such that all relations can be expressed in terms of conjugation and powers alone.

\begin{remark}
If a power quandle~$P$ as in Definition~\ref{def:pq-gen} exists, there is another such one that is a power subquandle of~$\Pq(G)$. We omit the proof, since we will not use this remark. It will be clear in the examples below.
\end{remark}

\begin{example}
Finite abelian groups are pq-generated. This is clear since these are products of finite cyclic groups, which are presentable as~$\langle a\,|\, a^n=e\rangle$, and the fact that~$a$ and~$b$ commute can be written as~$aba^{-1} = b$. A power subquandle~$P$ is given by the powers of the generators of the factors for some chosen product decomposition. 
\end{example}

\begin{example}
Coxeter groups are pq-generated. If~$S$ is a generating set, the Coxeter relations are~$s^2=e$ and otherwise take the form~$(st)^m=e$ for some~$m\geqslant 2$. The latter can be rewritten as~$aba^{-1}=t$ with~$b\in\{s,t\}$ and~$a=ststst...$ with~$m-1$ alternating factors. For instance, a power subquandle~$P$ for the symmetric groups is given by the conjugacy class of the transpositions and the unit--the elements that move at most two elements.
\end{example}

\begin{theorem}\label{thm:pq-gen}
Finite pq-generated groups with isomorphic underlying power quandles are isomorphic.
\end{theorem}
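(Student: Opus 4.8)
The plan is to exploit the adjunction between $\Gr$ and $\Pq$ together with Theorem~\ref{thm:central_quotient}. Suppose $G$ and $H$ are finite pq-generated groups with an isomorphism $\phi\colon\Pq(G)\to\Pq(H)$ of power quandles; write $G\cong\Gr(P)$ and $H\cong\Gr(Q)$ for power quandles $P,Q$. The first step is to reduce to the case $P=\Pq(G)$ and $Q=\Pq(H)$: by the universal property, the identity $P\to\Pq\Gr(P)=\Pq(G)$ induces the co-unit $\Gr(P)\to G$, but since $G\cong\Gr(P)$ already, chasing the adjunction shows that the co-unit $\epsilon_G\colon\Gr\Pq(G)\to G$ admits a section, hence (being surjective with central kernel $\rmA(G)$ by Theorems~\ref{thm:central} and~\ref{thm:kernel}) we actually want to argue that $\epsilon_G$ is an isomorphism. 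Indeed, if $G\cong\Gr(P)$, then applying $\Pq$ gives $\Pq(G)\cong\Pq\Gr(P)$, and the unit $P\to\Pq\Gr(P)$ composed with this gives a power quandle map; one checks the co-unit $\Gr\Pq(G)\to G$ splits and its kernel, being both central and killed by the section, must be trivial. So the crux is: \emph{for pq-generated finite $G$, the canonical map $\epsilon_G\colon\Gr\Pq(G)\to G$ is an isomorphism.}

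Granting that, the theorem is immediate: the isomorphism $\phi\colon\Pq(G)\to\Pq(H)$ induces, by functoriality of $\Gr$, an isomorphism $\Gr(\phi)\colon\Gr\Pq(G)\to\Gr\Pq(H)$ of groups, and composing with the identifications $\epsilon_G\colon\Gr\Pq(G)\xrightarrow{\sim}G$ and $\epsilon_H\colon\Gr\Pq(H)\xrightarrow{\sim}H$ yields a group isomorphism $G\cong H$. So essentially all the work is in establishing that pq-generated groups are fixed points of the comonad $\Gr\Pq$ in the appropriate sense. For this I would argue as follows. If $G\cong\Gr(P)$ via some isomorphism $\psi$, then $\Pq(\psi)\colon\Pq\Gr(P)\to\Pq(G)$ is an isomorphism of power quandles. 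The adjunction unit $\eta_P\colon P\to\Pq\Gr(P)$ need not be an isomorphism in general, but composing the power quandle map $\eta_P$ with the group-level co-unit recovers the identity on $P$ through the triangle identities; more usefully, the map $\Pq(\psi)\circ\eta_P\colon P\to\Pq(G)$ is a power quandle morphism which, by the universal property of $\Gr(P)$, extends to a group morphism $\Gr(P)\to\Gr\Pq(G)$ and one checks this is inverse to $\Gr(\Pq(\psi)\circ\eta_P)^{-1}$ composed appropriately --- the upshot being a group isomorphism $\Gr\Pq(G)\cong\Gr(P)\cong G$ compatible with the co-unit $\epsilon_G$. Since $\epsilon_G$ is surjective and we have produced an isomorphism $\Gr\Pq(G)\cong G$ over $G$, the kernel $\rmA(G)$ vanishes.

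The main obstacle I anticipate is the diagram chase verifying that the isomorphism $\Gr\Pq(G)\cong G$ produced from $G\cong\Gr(P)$ is genuinely compatible with $\epsilon_G$ --- i.e.\ that it is an isomorphism \emph{of extensions of $G$}, not merely an abstract isomorphism of groups of the same order. One must be careful: a priori $\Gr\Pq(G)$ and $G$ could be abstractly isomorphic while $\rmA(G)\neq 0$ (the finiteness from Corollary~\ref{cor:finite} alone does not rule this out). The clean way is to observe that the composite $P\xrightarrow{\eta_P}\Pq\Gr(P)\xrightarrow{\Pq(\psi)}\Pq(G)\xrightarrow{\eta_G^{-1}?}$ is not available, so instead one works with: the section $s\colon G\to\Gr\Pq(G)$ obtained by transporting $\psi^{-1}\colon G\to\Gr(P)$ along the canonical map $\Gr(P)\to\Gr\Pq(G)$ (induced by $\eta_P$). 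This $s$ is a morphism of power quandles (composite of such) and a normalized set-theoretic section of $\epsilon_G$; but then Theorem~\ref{thm:universal} applied to the \emph{identity} extension $\id\colon G\to G$ with its tautological section forces $\epsilon_G\circ s=\id_G$ and, running the universal property the other way, forces $s\circ\epsilon_G=\id$. Hence $\epsilon_G$ is an isomorphism, completing the proof.
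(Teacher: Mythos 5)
Your argument hinges on the claim that for a finite pq-generated group $G$ the co-unit $\epsilon_G\colon\Gr\Pq(G)\to G$ is an isomorphism, i.e.\ that $\rmA(G)=0$. That claim is false, and the Klein four group $V=\bbZ/2\times\bbZ/2$ already refutes it: $V$ is pq-generated (it is $\Gr(P)$ for the power subquandle $P=\{e,a,b\}$ on two of its involutions), yet $\Gr\Pq(V)$ is elementary abelian of order $8$ on the generators $\sigma(a),\sigma(b),\sigma(c)$ --- the defining relations of $\Gr\Pq(V)$ record only conjugation and powers, so nothing forces $\sigma(a)\sigma(b)=\sigma(c)$, and $\rmA(V)\cong\bbZ/2$. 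The point you miss is that pq-generated means $G\cong\Gr(P)$ for \emph{some} power quandle $P$, which need not be $\Pq(G)$ itself, so the co-unit at $\Pq(G)$ need not be invertible. The specific step that breaks is your appeal to Theorem~\ref{thm:universal} to force $s\circ\epsilon_G=\id$: the uniqueness there is for morphisms commuting with the \emph{canonical} normalized section $\sigma$, and your group-theoretic section $s$ does not agree with $\sigma$ (in the example, $s(c)=\sigma(a)\sigma(b)\neq\sigma(c)$), so the universal property only yields a (typically non-injective) endomorphism of $\Gr\Pq(G)$ over $G$, not an inverse to $\epsilon_G$.

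What is salvageable is your observation that a splitting exists: applying $\Gr$ to the unit $P\to\Pq\Gr(P)$ does split $\epsilon_G$, and since the kernel is central by Theorem~\ref{thm:central} this gives $\Gr\Pq(G)\cong G\times\rmA(G)$ --- but with $\rmA(G)$ possibly nonzero. This is exactly where the paper's proof goes instead: from $\Pq(G)\cong\Pq(H)$ one gets $G\times\rmA(G)\cong H\times\rmA(H)$, then passes to centers to get $\rmZ(G)\times\rmA(G)\cong\rmZ(H)\times\rmA(H)$, uses Theorem~\ref{thm:center} and the J\'onsson--Tarski cancellation of finite direct factors to cancel $\rmZ(G)\cong\rmZ(H)$ and deduce $\rmA(G)\cong\rmA(H)$, and finally cancels $\rmA(G)$ (finite by Corollary~\ref{cor:finite}) to conclude $G\cong H$. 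Some such cancellation argument is genuinely needed; the direct identification $\Gr\Pq(G)\cong G$ is not available.
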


\begin{proof}
Let~$G\cong\Gr(P)$ for some power quandle~$P$. By abstract non-sense, we can apply the left-adjoint~$\Gr$ to the unit~$P\to\Pq\Gr(P)$ of the adjunction to get a morphism~\hbox{$\Gr(P)\to\Gr\Pq\Gr(P)$} of groups that splits the extension given by the co-unit~$\Gr\Pq\Gr(P)\to\Gr(P)$. Our Theorem~\ref{thm:central} now implies that~\hbox{$\Gr\Pq(G)\cong G\times\rmA(G)$}.

Therefore, if~$G$ and~$H$ are two pq-generated groups, an isomorphism~\hbox{$\Pq(G)\cong\Pq(H)$} of power quandles immediately induces an isomorphism~$\Gr\Pq(G)\cong\Gr\Pq(H)$ of groups, and by what we have shown in the first paragraph of the proof, an isomorphism
\begin{equation}\label{eq:cancel_1}
G\times\rmA(G)\cong H\times\rmA(H).
\end{equation}
Next, recall that we can cancel finite factors in products of groups~\cite{Jonsson--Tarksi}. Passing from~\eqref{eq:cancel_1} to centers, we get
\begin{equation}\label{eq:cancel_2}
\rmZ(G)\times\rmA(G)\cong\rmZ(H)\times\rmA(H).
\end{equation}
Recalling~$\rmZ(G)\cong\rmZ(H)$ from Theorem~\ref{thm:center}, we can cancel this finite factor in~\eqref{eq:cancel_2} and get~\hbox{$\rmA(G)\cong\rmA(H)$}. Then we can cancel this factor in~\eqref{eq:cancel_1}, because it is finite by Corollary~\ref{cor:finite}, and get~\hbox{$G\cong H$}, as claimed.
\end{proof}

\begin{question}\label{q:pq-gen}
Are all finite groups pq-generated?
\end{question}

By Theorem~\ref{thm:pq-gen}, a positive answer to this question would immediately imply a negative answer to Question~\ref{q:forgetful}, and conversely.

%%%

%%%

\end{document}